\newcommand{\R}{\mathbb{R}}
\newcommand{\Z}{\mathbb{Z}}
\newcommand{\Or}{\mathcal{O}}
\DeclareMathOperator{\Diff}{Diff}
\DeclareMathOperator{\Homeo}{Homeo}
\DeclareMathOperator{\Fix}{Fix}
\DeclareMathOperator{\Per}{Per}
\DeclareMathOperator{\supp}{supp}
\DeclareMathOperator{\Int}{Int}
\DeclareMathOperator{\egr}{egr}
\DeclareMathOperator{\SL}{SL}
\DeclareMathOperator{\genus}{genus}
\newtheorem{theorem}{Theorem}[section]
\newtheorem{lemma}[theorem]{Lemma}
\newtheorem{coro-llary}[theorem]{Corollary}
\theoremstyle{definition}
\newtheorem{remark}[theorem]{Remark}
\theoremstyle{definition}
\newtheorem{question}[theorem]{Question}
\theoremstyle{definition}
\newtheorem{definition}[theorem]{Definition}
\begin{document}

\title{Distortion for diffeomorphisms of surfaces with boundary}
\author{Kiran Parkhe}

\thanks{2010 \textit{Mathematics Subject Classification}. Primary 37C85; Secondary 57M60, 22F10}
\thanks{\textit{Key words and phrases}. Distortion element, surface with boundary, normal form, invariant measure.}

\begin{abstract}
If $G$ is a finitely generated group with generators $\{g_1, \dots, g_s\}$, we say an infinite-order element $f \in G$ is a distortion element of $G$ provided that $\displaystyle \liminf_{n \to \infty} \frac{|f^n|}{n} = 0$, where $|f^n|$ is the word length of $f^n$ with respect to the given generators. Let $S$ be a compact orientable surface, possibly with boundary, and let $\Diff(S)_0$ denote the identity component of the group of $C^1$ diffeomorphisms of $S$. Our main result is that if $S$ has genus at least two, and $f$ is a distortion element in some finitely generated subgroup of $\Diff(S)_0$, then $\supp(\mu) \subseteq \Fix(f)$ for every $f$-invariant Borel probability measure $\mu$. Under a small additional hypothesis the same holds in lower genus.

For $\mu$ a Borel probability measure on $S$, denote the group of $C^1$ diffeomorphisms that preserve $\mu$ by $\Diff_\mu(S)$. Our main result implies that a large class of higher-rank lattices admit no homomorphisms to $\Diff_{\mu}(S)$ with infinite image. These results generalize those of Franks and Handel \cite{F&H 2} to surfaces with boundary.
\end{abstract}
\maketitle

\section{Introduction}
In this article, $S$ is a compact orientable surface with boundary $\partial S$. We denote $\Int(S) = S \setminus \partial S$. If $U \subseteq S$ is open (in the topology on $S$), we denote $\partial U = U \cap \partial S$, and $\Int(U) = U \cap \Int(S)$. $\mu$ will be a (not necessarily smooth) Borel probability measure on $S$. We denote the group of $C^1$ diffeomorphisms of $S$ (preserving $\mu$) by $\Diff(S)$ (resp. $\Diff_\mu(S)$), and we denote its identity component by $\Diff(S)_0$ (resp. $\Diff_\mu(S)_0$). The support of $\mu$ is denoted $\supp(\mu)$. The set of fixed points of $f$ is denoted $\Fix(f)$, and the set of periodic points is denoted $\Per(f)$.

\begin{definition}
If $G$ is a finitely generated group, and we choose the generating set $\{g_1, \dots, g_s\}$, then $f \in G$ is said to be a \textit{distortion element} of $G$ if $f$ has infinite order and

$$\liminf_{n \to \infty} \frac{|f^n|}{n} = 0,$$

where $|f^n|$ is the word length of $f^n$ in the generators $\{g_1, \dots, g_s\}$.
\end{definition}

See Gromov \cite{Gromov} for a good discussion with many examples.

\begin{remark}
We could have taken the liminf to be a limit; because word length is subadditive, the limit must exist, and is called the \textit{translation length} (see \cite{G&S}).

It is straightforward to see that the property of being a distortion element is independent of the finite generating set chosen. It is obvious that if $G' \supseteq G$ is also finitely generated, and $f$ is distorted in $G$, then $f$ is distorted in $G'$.

If $G$ is not finitely generated, we say that $f \in G$ is distorted in $G$ if it is distorted in some finitely generated subgroup of $G$.
\end{remark}

One reason why distortion elements are interesting is that well-known groups have them. It is easy to check that the central elements of the three-dimensional Heisenberg group are distortion elements. Lubotzky, Mozes, and Raghunathan proved that irreducible nonuniform lattices in higher-rank Lie groups have distortion elements (\cite{LMR}).

Franks and Handel \cite{F&H 2} proved that for a closed surface $S$ of genus at least two, if $f \in \Diff(S)_0$ is a distortion element and $\mu$ is an $f$-invariant Borel probability measure on $S$, then $\supp(\mu) \subseteq \Fix(f)$. They also proved that the same holds for $S = S^2$ or $T^2$, provided that $f$ has at least three or one fixed points, respectively.

Our main theorem generalizes this result to the case of surfaces with boundary:

\begin{theorem}
\label{Dist}
Let $S$ be a compact surface, possibly with boundary. Let $f \colon S \to S$ be a $C^1$ diffeomorphism which is isotopic to the identity. Suppose $f$ is distorted in $\Diff(S)_0$. Suppose that the pair $(S, f)$ satisfies the following property $(\star)$:

\noindent Either $\genus(S) \geq 2$, or
\begin{itemize}
\item If $S = S^2$, $f$ has at least three fixed points.
\item If $S = D$ is a closed disk, $f$ has at least one fixed point on $\partial D$ or at least two fixed points in $\Int(D)$.
\item If $S = A$ is a closed annulus, $f$ has at least one fixed point.
\item If $S = T^2$, $f$ has at least one fixed point.
\end{itemize}

\noindent If $\mu$ is an $f$-invariant Borel probability measure, then $\supp(\mu) \subseteq \Fix(f)$. In particular, $f$ has no non-fixed periodic points, since we can always put a finite invariant measure on a periodic orbit.
\end{theorem}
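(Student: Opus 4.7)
My main approach is to reduce Theorem \ref{Dist} to the closed-surface case of Franks and Handel by doubling. For $S$ with $\partial S \neq \emptyset$, let $DS$ denote the closed surface obtained by gluing two copies of $S$ along $\partial S$ via the identity, and let $\sigma \colon DS \to DS$ be the involution exchanging the two copies. Any $f \in \Diff(S)_0$ preserves $\partial S$ setwise, so it extends to a map $Df \colon DS \to DS$ that commutes with $\sigma$ and acts as $f$ on each copy. To make $Df$ actually $C^1$ I would first arrange, via an isotopy in $\Diff(S)_0$, that $f$ has trivial normal derivative on a collar of $\partial S$, so that the reflected extension glues smoothly; since being a distortion element is independent of the generating set, this isotopy-replacement is harmless.

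With that in hand, if $f$ is distorted in $G = \langle g_1, \dots, g_s \rangle \subseteq \Diff(S)_0$, doubling all generators yields $DG = \langle Dg_1, \dots, Dg_s \rangle \subseteq \Diff(DS)_0$ with $|Df^n|_{DG} \leq |f^n|_G$, so $Df$ is distorted in $DG$. An $f$-invariant Borel probability measure $\mu$ extends symmetrically to a $Df$-invariant probability $D\mu = \tfrac{1}{2}(\mu + \sigma_* \mu)$ on $DS$ whose support is $\supp(\mu) \cup \sigma(\supp(\mu))$. The desired conclusion $\supp(\mu) \subseteq \Fix(f)$ then follows from $\supp(D\mu) \subseteq \Fix(Df)$.

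When $\genus(S) \geq 2$, the doubled surface has $\genus(DS) \geq 4$, so Franks and Handel's theorem applies directly and finishes the proof. In the remaining low-genus cases from $(\star)$, the double is $S^2$ (when $S = D$) or $T^2$ (when $S = A$, or when $S = T^2$ itself and no doubling is needed). An interior fixed point of $f$ contributes two fixed points to $Df$, while a boundary fixed point contributes one, so the hypotheses in $(\star)$ are calibrated to supply enough fixed points of $Df$ for the appropriate sphere or torus case of Franks and Handel. The disk-to-sphere reduction is the most delicate: the hypothesis ``one boundary or two interior fixed points'' only yields two fixed points of $Df$ on $S^2$, so a supplementary argument---using the Lefschetz index of $Df$ on $S^2$ together with the $\sigma$-symmetry of $D\mu$---is required to extract a third fixed point.

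\textbf{Main obstacle.} The principal technical difficulty is ensuring that the doubled map $Df$ is genuinely a $C^1$ diffeomorphism without destroying distortion. Because $C^1$ maps do not in general admit smooth reflection across a submanifold, the reduction demands a careful isotopy argument showing that any distortion element of $\Diff(S)_0$, together with a finite generating set witnessing its distortion, can be simultaneously deformed so that all boundary normal derivatives become tame enough to glue. A secondary obstacle is the sharpness of the three-fixed-point hypothesis on $S^2$ in Franks and Handel, which must be bridged in the disk case by exploiting the $\Z/2$-symmetry of the doubled dynamics to produce the missing fixed point.
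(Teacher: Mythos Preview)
Your doubling strategy has a genuine gap at its very first step: the assignment $g \mapsto Dg$ is not a group homomorphism from $\Diff^1(S)_0$ to $\Diff^1(DS)_0$. For $Dg$ to be $C^1$ across the gluing locus $\partial S$, the derivative $Dg_p$ at each $p \in \partial S$ must commute with the reflection across $T_p(\partial S)$; in a tangent/normal frame this means $Dg_p$ is block-diagonal, which generic $C^1$ diffeomorphisms do not satisfy. Your proposed fix---isotoping each generator to have tame normal derivative---does not preserve the relations that witness distortion: if $f^n = g_{i_1} \cdots g_{i_k}$ in $\Diff(S)_0$ and you replace each $g_j$ by an isotopic $g_j'$, there is no reason for any $f'$ isotopic to $f$ to satisfy $(f')^n = g'_{i_1} \cdots g'_{i_k}$. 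The slogan ``distortion is independent of the generating set'' lets you enlarge the generating set inside the \emph{same} ambient group; it does not let you pass to a different finitely generated group whose generators merely happen to be individually isotopic to the old ones. Conjugation by a single diffeomorphism would preserve relations, but one conjugator cannot simultaneously diagonalize the boundary derivatives of all the $g_i$. Since the Franks--Handel closed-surface theorem genuinely uses $C^1$ (via Thurston stability and the normal-form machinery), the reduction cannot be carried out in $C^0$ either.

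There is also a secondary gap in the disk case. If $f\colon D\to D$ has exactly two fixed points, both on $\partial D$, then $(\star)$ holds, but $Df\colon S^2\to S^2$ has exactly two fixed points, both equatorial; the Lefschetz number $2$ is already accounted for by two index-one fixed points, so no Lefschetz argument produces a third, and the Calegari--Freedman rotations show that two fixed points on $S^2$ are not enough in general. The $\sigma$-symmetry of $D\mu$ does not obviously bridge this. The paper avoids both difficulties by working directly on $S$: it first extends the Handel/Franks--Handel normal-form theorem to surfaces with boundary, then proves a structure lemma (six alternatives, reduced to four) tailored to the boundary case, including a new alternative---an irrational rotation number on a component of $\partial S$---which is ruled out for distortion elements by a separate lifting argument.
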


\begin{remark}
The theorem actually holds if $S = S^2$ and $f \colon S^2 \to S^2$ has only one fixed point. In that case, it is trivial, in the sense that it does not depend on $f$ being a distortion element. Let $x$ be the fixed point of $f$. If $\mu$ were an invariant probability measure not supported on $\{x\}$, we would have an action of $f$ on $S^2 \setminus \{x\}$ with an invariant measure, hence recurrent points by the Poincar\'{e} recurrence theorem, and therefore fixed points in $S^2 \setminus \{x\}$ by the Brouwer plane translation theorem (see e.g. \cite{Franks}), a contradiction. Therefore, an invariant probability measure will be supported on $\{x\}$.

Note that it is possible for a distortion element of $\Diff(S^2)_0$ to have only one fixed point. Let $f', g'$, and $h' \colon \R^2 \to \R^2$ be defined in the following way: $g'(x, y) = (x + y, y)$, $h'(x, y) = (x, y + 1)$, and $f'$ is the commutator $[g', h']$, so that $f'(x, y) = (x + 1, y)$. This is a faithful action of the three-dimensional (discrete) Heisenberg group, since $g'$ and $h'$ both commute with their commutator $f'$. Let $\phi$ be the diffeomorphism of $\R^2$ given by $\phi(x, y) = (\frac{x}{1 + y^2}, y)$; we conjugate the above action of the Heisenberg group by $\phi$, i.e. we take $f = \phi f' \phi^{-1}, g = \phi g' \phi^{-1}$, and $h = \phi h' \phi^{-1}$. The reason for conjugating is that $g'$ is not differentiable at infinity, but $f, g$, and $h$ are, with derivative the identity. So we can extend $f, g$, and $h$ to be diffeomorphisms of $S^2 = \R^2 \cup \{\infty\}$; by a slight abuse of notation we use the same names for these extensions. We have found a subgroup $\langle g, h \rangle$ of $\Diff(S^2)_0$ isomorphic to the Heisenberg group, and the commutator $f$ is a distortion element fixing only $\infty$.
\end{remark}

\begin{question}
Suppose $S = D$ is the closed disk, and $f$ has only one fixed point, in $\Int(D)$. Is it possible that there exists an $f$-invariant Borel probability measure $\mu$, with support not contained in $\Fix(f)$? In particular, is an irrational rotation of the disk distorted in $\Diff(D)_0$?

The methods of Militon \cite{Militon} on recurrent diffeomorphisms may be of use, but we do not immediately see how to apply them to surfaces with boundary.
\end{question}

\begin{remark}
Aside from the above cases, if $(S, f)$ do not satisfy $(\star)$ then there are counterexamples to the conclusion of the theorem. Calegari and Freedman \cite{C&F} have shown that an irrational rotation of $S^1$ is distorted in $\Diff^1(S^1)_0$. Foliating the closed annulus by circles, we get that an irrational rotation of the annulus $A$ is distorted in $\Diff^1(A)_0$. The same procedure works for the torus $T^2$.

Calegari and Freedman proved in the same paper that irrational rotations of $S^2$ are distorted in $\Diff^\infty(S^2)_0$, showing that the theorem does not hold for distortion elements in $\Diff^1(S^2)_0$ with two fixed points.
\end{remark}

\noindent
\textbf{Applications.} Clearly, if $S$ is a surface with boundary, and $f \in \Diff(S)_0$ is a distortion element having enough fixed points (as given by Theorem \ref{Dist}), then $f$ cannot preserve any measure with full support, such as area. Moreover, Theorems 1.5 and 1.6 and Corollaries 1.7 to 1.10 of \cite{F&H 2} continue to remain valid for surfaces with boundary, with two caveats: sometimes additional hypotheses are needed in the genus $0$ case, and there is a slight misprint in Corollary 1.9, which should say ``$\Diff_\mu(S)_0$'' instead of ``$\Diff_\mu(S)$''.

For example, we have the following facts for a compact oriented surface $S$ with nonempty boundary, equipped with a Borel probability measure $\mu$:

\begin{list}
{$\bullet$}
{\setlength\leftmargin{.15in}}
\item Let $\mathscr{G}$ be an irreducible nonuniform lattice in a semisimple real Lie group of real rank at least two. Assume that the Lie group is connected, without compact factors, and with finite center. Suppose $S \neq D$ (the closed disk). Then every homomorphism $\phi \colon \mathscr{G} \to \Diff_\mu(S)$ has finite image.
\item Suppose that $\mathscr{G}$ is a finitely generated, almost simple group that has a subgroup $\mathscr{H}$ isomorphic to the three-dimensional Heisenberg group. Then any homomorphism $\phi \colon \mathscr{G} \to \Diff_\mu(S)$ has finite image.
\item Suppose that $\supp(\mu) = S$. Suppose that $\mathscr{N} \subset \Diff_\mu(S)_0$ is nilpotent. If $S \neq D$, then $\mathscr{N}$ is abelian. 
\end{list}

In verifying these claims, one uses the Thurston stability theorem (\cite{Thurston}, Theorem 3), which remains valid for a surface with boundary. One also uses a result of Zimmer (\cite{Zimmer}, Theorem 3.14) that if $\mathscr{G}$ is a finitely generated discrete group with Kazhdan property (T), and $\phi \colon \mathscr{G} \to \Diff^1(S)$ has a finite orbit, then the image of $\phi$ is finite. This also works for surfaces with boundary; it relies on the fact that a Kazhdan group has no nontrivial homomorphism to $\R$ and has only finite image homomorphisms to $\SL(2, \R)$, together with the Thurston stability theorem.


\begin{question}
How much of this remains true if we consider $\Homeo(S)_0$ instead of $\Diff(S)_0$?
\end{question}

\section{Normal form}

The first result we will need is that a $C^1$ diffeomorphism $f$ of a compact surface with boundary is isotopic, relative to $\Fix(f)$, to a \textit{normal form} $\phi$ for $f$.

\begin{definition}
Let $S$ be a compact surface with boundary. Let $f \colon S \to S$ be an orientation-preserving homeomorphism. Let $\phi \colon S \to S$ be another homeomorphism. We say that $\phi$ is isotopic to $f$ relative to $\Fix(f)$ if there is $F \colon S \times [0, 1] \to S$, a continuous family of homeomorphisms of $S$, such that $F(x, 0) = f(x)$ and $F(x, 1) = \phi(x)$ for all $x \in S$, and if $x \in \Fix(f)$ then $F(x, t) = x$ for all $t$.

Following ideas of Handel \cite{Handel} and Franks and Handel \cite{F&H 1}, we say that $f$ \textit{has a normal form relative to its fixed point set} if there is a finite set $R$ of disjoint simple closed curves called \textit{reducing curves} in $M = \Int(S) \setminus \Fix(f)$ and a homeomorphism $\phi$ isotopic to $f$ relative to $\Fix(f)$ such that:

\begin{enumerate}
\item $\phi$ permutes disjoint open annulus neighborhoods $A_j \subseteq M$ of the elements $\gamma_j \in R$.
\end{enumerate}

\vspace{12pt}

\noindent Let $\{S_i\}$ be the components of $S \setminus \cup A_j$, let $X_i = \Fix(f) \cap S_i$, let $M_i = S_i \setminus X_i$, and let $r_i$ be the smallest positive integer such that $\phi^{r_i}(M_i) = M_i$. Note that $r_i = 1$ if $X_i \neq \varnothing$.

\vspace{12pt}

\begin{enumerate}
\setcounter{enumi}{1}
\item If $X_i$ is infinite then $\phi|_{S_i} = id$.
\item If $X_i$ is finite then $M_i$ has negative Euler characteristic and $\phi^{r_i}|_{M_i}$ is either pseudo-Anosov or periodic. In the periodic case, $\phi^{r_i}|_{M_i}$ is an isometry of a hyperbolic structure on $M_i$.
\end{enumerate}
\end{definition}


The following says that Theorem 1.2 of \cite{F&H 1} still holds for surfaces with boundary.

\begin{theorem}
\label{Normalform}
If $f$ is a $C^1$ diffeomorphism of a compact surface with boundary, $S$, then $f$ is isotopic relative to $\Fix(f)$ to a homeomorphism $\phi$ which is a normal form for $f$.
\end{theorem}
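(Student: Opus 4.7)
The strategy is to reduce the statement to the closed-surface version, Theorem 1.2 of \cite{F&H 1}, via a doubling construction. Form the closed double $DS = S \cup_{\partial S} S'$, where $S'$ is a second copy of $S$, and let $\iota \colon DS \to DS$ be the involution that exchanges the two copies and fixes $\partial S$ pointwise. Define $\tilde{f} \colon DS \to DS$ by $\tilde{f}|_S = f$ and $\tilde{f}|_{S'} = \iota f \iota$. In collar coordinates $(t,s)$ near $\partial S$ (with $t > 0$ in $S$ and $t < 0$ in $S'$), a direct check shows that the reflection construction makes the one-sided partials of $\tilde{f}$ agree at $t = 0$, so $\tilde{f}$ is a genuine $C^1$ diffeomorphism of the closed surface $DS$. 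By construction it commutes with $\iota$ and satisfies $\Fix(\tilde{f}) = \Fix(f) \cup \iota(\Fix(f))$.

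Apply the Franks--Handel theorem to $\tilde{f}$ on $DS$ to obtain a normal form $\tilde{\phi}$ with reducing multicurve $\tilde{R}$ in $DS \setminus \Fix(\tilde{f})$ and complementary pieces $\{\tilde{S}_i\}$. The central step, and in my estimation the principal technical obstacle, is to upgrade $\tilde{\phi}$ and $\tilde{R}$ to be $\iota$-equivariant. Since $\iota [\tilde{f}] \iota = [\tilde{f}]$ as mapping classes relative to $\Fix(\tilde{f})$, the canonical Nielsen--Thurston reducing system is $\iota$-invariant up to isotopy and can thus be chosen genuinely $\iota$-invariant. On each piece the normal form representative (a pseudo-Anosov map, an isometry of a hyperbolic structure, or the identity) is unique up to isotopy on the piece, so the choices on the pieces can be made to intertwine $\iota$. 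The isotopy from $\tilde{f}$ to $\tilde{\phi}$ is then made $\iota$-equivariant either by running the Franks--Handel construction symmetrically or by a suitable averaging argument; the work here is to verify that each ingredient in their proof, and in particular the arguments handling infinite $\Fix$ sets, admits an equivariant version.

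Given an $\iota$-equivariant normal form, descend to $S$ by setting $\phi = \tilde{\phi}|_S$. The reducing curves of $\tilde{R}$ that lie in $\Int(S)$ form the reducing multicurve $R$ for $\phi$; their $\iota$-images in $\Int(S')$ are automatically accounted for. Components of $\tilde{R}$ that happen to lie on $\partial S$ (the fixed set of $\iota$) are simply omitted from $R$ and absorbed into the adjacent piece, which then retains (part of) $\partial S$ as a boundary component. Each piece $S_i$ is the intersection of a piece $\tilde{S}_i$ with $S$, and by $\iota$-equivariance the pseudo-Anosov or isometric structure on $\tilde{M}_i$ descends to the analogous structure on $M_i$. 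Restricting the $\iota$-equivariant isotopy from $\tilde{f}$ to $\tilde{\phi}$ yields the required isotopy from $f$ to $\phi$ relative to $\Fix(f)$, completing the proof.
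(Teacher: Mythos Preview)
There is a genuine gap at the very first step: the doubled map $\tilde{f}$ is \emph{not} $C^1$ in general. In collar coordinates $(t,s)$ with $t\ge 0$ in $S$, write $f(t,s)=(f_1(t,s),f_2(t,s))$; since $f$ preserves $\partial S$ we have $f_1(0,s)=0$, but there is no reason for $\partial_t f_2(0,s)$ to vanish. For $t<0$ your reflection gives $\tilde{f}(t,s)=(-f_1(-t,s),f_2(-t,s))$, so $\partial_t\tilde{f}_2|_{t\to 0^-}=-\partial_t f_2(0,s)$ while $\partial_t\tilde{f}_2|_{t\to 0^+}=\partial_t f_2(0,s)$. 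Concretely, $f(t,s)=(t,s+t)$ doubles to a map whose second coordinate has a corner along $\{t=0\}$. This matters because Theorem~1.2 of \cite{F&H 1} genuinely uses the $C^1$ hypothesis (via Handel's geodesic-isotopy lemma near the accumulation set of $\Fix(f)$), so you cannot simply feed a Lipschitz $\tilde{f}$ into it. One might hope to first isotope $f$ rel $\Fix(f)$ to a form with $\partial_t f_2\equiv 0$ along $\partial S$, but arranging this near accumulation points of $\Fix(f)\cap\partial S$ is exactly the delicate local work you were trying to avoid.

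Even granting a $C^1$ double, the equivariance step is not actually carried out. That the canonical reducing system is $\iota$-invariant up to isotopy is fine, but producing an $\iota$-equivariant isotopy from $\tilde{f}$ to $\tilde{\phi}$ \emph{relative to $\Fix(\tilde{f})$}---particularly on pieces where $\Fix(\tilde{f})$ is infinite and meets $\partial S$---is precisely the hard part of the Franks--Handel construction, and ``averaging'' does not obviously preserve the rel-$\Fix$ condition. The paper avoids all of this by working directly on $S$: it redoes Handel's local lemma using a metric in which the components of $\partial S$ are geodesics (so the geodesic isotopy keeps $\partial S$ invariant), applies Hirsch's isotopy extension theorem to push the local isotopy to a global one, arranges the resulting neighborhood $W$ of $A(f)$ to absorb annular collars of the boundary circles it touches, and then runs Thurston's decomposition on the finitely many complementary pieces. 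That direct adaptation is what your doubling argument would ultimately need to reproduce in equivariant form.
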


\begin{lemma}
Let $A(f)$ denote the accumulation set of $\Fix(f)$. It is possible to find a neighborhood $V$ of $A(f)$ with finitely many components such that the following hold:

\begin{enumerate}
\item{$f|_V \colon V \to S$ is isotopic to the inclusion relative to $\Fix(f) \cap V$.}
\item{If $x \in V \setminus \Fix(f)$ then the path from $f(x)$ to $x$ determined by the isotopy is contained in $S \setminus \Fix(f)$.}
\end{enumerate}
\end{lemma}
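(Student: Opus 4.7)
The plan is to prove this in two stages: first choose $V$, then produce the isotopy on $V$ using the flow of a suitable vector field. The set $A(f)$ is a closed subset of $\Fix(f)\subseteq S$ and hence compact. I would cover $A(f)$ by finitely many open disks contained in Euclidean charts of $S$, and let $V$ be the union of those connected components of this cover that meet $A(f)$; only finitely many such components can intersect the compact set $A(f)$, so $V$ is an open neighborhood of $A(f)$ with finitely many components. Shrinking the disks, the continuity of $f$ together with $f|_{A(f)} = \mathrm{id}$ ensures that $f$ is $C^0$-close to the identity on $V$.

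For the isotopy, my plan is to produce a continuous vector field $X$ on $V$ whose zero set is exactly $\Fix(f)\cap V$ and whose time-$1$ flow equals $f|_V$. Setting $F_t(x) := \phi^X_{1-t}(x)$ then yields a continuous isotopy from $f|_V$ (at $t=0$) to the inclusion (at $t=1$) that fixes $\Fix(f)\cap V$ pointwise, which gives (1). For (2), first note that $f$ is a diffeomorphism, so $f^{-1}(\Fix(f)) = \Fix(f)$ — if $f(x)\in\Fix(f)$ then $f^2(x) = f(x)$, and injectivity forces $x = f(x)$ — so the endpoints $x$ and $f(x)$ of each isotopy path both lie in $S\setminus\Fix(f)$ whenever $x\in V\setminus\Fix(f)$. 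Crucially, uniqueness of solutions to $\dot{x} = X(x)$ prevents an integral curve of $X$ from reaching the zero set once it starts away from it, so the entire path $t \mapsto F_t(x)$ remains in $V \setminus \Fix(f) \subseteq S\setminus\Fix(f)$.

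The hard part will be producing the vector field $X$. On a neighborhood where $f$ is $C^1$-close to the identity, $X$ can be obtained as a Riemannian logarithm of $f$, whose zero set is precisely $\Fix(f)$. However, $C^1$-closeness near $A(f)$ is not automatic: accumulation of fixed points at $p\in A(f)$ forces $Df_p v = v$ only for those directions $v$ in which fixed points accumulate, so $Df_p$ can differ from the identity in transverse directions. I would stratify $A(f)$ according to the local structure of $\Fix(f)$: at points where fixed points accumulate from all directions (in particular at interior points of $\Fix(f)$) one has $Df_p = I$ and the log construction applies directly, whereas along arcs of $\Fix(f)$ one can work in local coordinates adapted to the arc in which $Df$ is already the identity tangentially, building $X$ from tangential and normal pieces separately. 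Patching via a partition of unity subordinate to the cover, together with a final shrinking of $V$, should assemble these into a global $X$ on $V$ whose flow realizes the desired isotopy.
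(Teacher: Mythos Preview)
Your strategy is appealing because the flow of a vector field is automatically an isotopy and item (2) would follow from uniqueness of integral curves. However, the construction of $X$ as you describe it does not work.

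First, the Riemannian logarithm $X(x)=\exp_x^{-1}(f(x))$ does \emph{not} have time-$1$ flow equal to $f$. Already on $\R$ with $f(x)=2x$ one gets $X(x)=x$, whose time-$1$ flow is $x\mapsto ex$, not $f$. What you may have in mind is the Lie-theoretic logarithm (the local inverse of $X\mapsto\phi^1_X$ near $X=0$), but that is only defined when $f$ is $C^1$-close to the identity, which---as you correctly observe---fails in general near $A(f)$. Second, even if you could produce such vector fields $X_i$ on a cover $\{U_i\}$, patching by a partition of unity does not preserve the condition ``time-$1$ flow equals $f$'': this condition is highly nonlinear in $X$, so $\sum_i\rho_i X_i$ will not have time-$1$ flow $f$. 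Your stratification and patching scheme therefore does not assemble into the desired $X$.

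The paper's route is more direct and avoids these problems. Fix a metric with geodesic boundary and set $f_t(x)=\gamma_x(t)$, the constant-speed geodesic from $x$ to $f(x)$; on a small enough $V$ this is well defined and certainly fixes $\Fix(f)\cap V$ pointwise. The only issue is whether each $f_t$ is a local diffeomorphism. At $a\in A(f)$ one computes $(Df_t)_a = (1-t)I + t\,Df_a$, so one must rule out $Df_a v = -kv$ with $k>0$. This is exactly where orientation enters: accumulation of fixed points at $a$ produces some $w\neq 0$ with $Df_a w = w$, and in dimension two an orientation-preserving linear map with a $+1$ eigenvector cannot have a negative real eigenvalue. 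Hence $(Df_t)_a$ is invertible for all $t$, and after shrinking $V$ the family $f_t$ is an isotopy; the remaining verifications, including (2), follow Handel's original argument verbatim. Note that your vector-field approach never invokes the orientation hypothesis, which is a further sign that the construction cannot go through as written.
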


\begin{proof}
We elaborate on the ideas of Lemma 4.1 of \cite{Handel}, and show that they remain valid with boundary. Choose a metric on $S$ such that the boundary components are geodesics. Note that for any $x \in S$, there is a number $d_x$ small enough so that for $y$ within $d_x$ of $x$, there is a unique shortest geodesic from $x$ to $y$.  Since $S$ is compact, there is a number $d$ which has this property for all $x \in S$.  We may choose a neighborhood $V$ of $A(f)$ small enough so that for all $x \in V$, $d(x, f(x)) < d$.  Let $\gamma_x \colon [0, 1] \to S$ be the constant-speed parametrization of this geodesic from $x$ to $f(x)$. In this way, we get a homotopy $f_t$ between the inclusion $i \colon V \hookrightarrow S$ to $f|_V \colon V \to S$, where $f_t(x) = \gamma_x(t)$.  We claim that, if $V$ is small enough, this is actually an isotopy.

Let $a \in A(f)$.  Then there is some $w \in T_a(S)$ fixed by $Df_a$.  Therefore, since $f$ is orientation-preserving, there cannot be $v \in T_a(S)$ such that $Df_a(v) = -kv$, with $k > 0$. Note that this is also true if $a \in \partial S$, in which case $T_a(S)$ is the upper half plane. It is not hard to see that $(Df_t)_a(v) = t\cdot Df_a(v) + (1 - t)\cdot v$. This implies that $(Df_t)_a(v) \neq 0$ for all $v \neq 0 \in T_a(S)$.


The rest follows without change from the proof of Handel \cite{Handel}, Lemma 4.1. Note that the components of $V$ form an open cover of $A(f)$; since $A(f)$ is compact, we may find a finite subcover, so throwing out components if necessary we may assume that $V$ has finitely many components.



\end{proof}

\begin{proof}[Proof of Theorem \ref{Normalform}]
The isotopy given by the above lemma may be extended to all of $S$, by applying the following theorem.

\begin{theorem}[Morris Hirsch \cite{Hirsch}, Theorem 1.4, p. 180]
\label{Hirsch}
Let $M$ be a manifold with boundary. Let $U \subset M$ be an open set and $A \subset U$ a compact set. Let $F \colon U \times I \to M$ be an isotopy of $U$, where $I = [0, 1]$. Let $\hat{F} \colon U \times I \to M \times I$ be defined by $\hat{F}(x, t) = (F(x, t), t)$. Suppose that $\hat{F}(U \times I) \subset M \times I$ is open.  Then there is a diffeotopy of $M$ having compact support, which agrees with $F$ on a neighborhood of $A \times I$.
\end{theorem}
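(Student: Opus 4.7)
The plan is to follow the classical approach: convert the isotopy $F$ into a time-dependent vector field on the open set $W = \hat{F}(U \times I) \subset M \times I$, cut it off using the openness of $W$ to obtain a compactly supported time-dependent vector field on all of $M$, and integrate to get the desired diffeotopy.

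First, I would define the \emph{isotopy vector field} $\tilde{X}$ on $W$. For $(p,t)\in W$, injectivity of each time slice $F_t \colon U \to M$, together with the openness hypothesis forcing $\hat{F} \colon U \times I \to W$ to be a diffeomorphism, gives a unique $x\in U$ with $F(x,t)=p$. Set
\[
\tilde{X}(p,t) \;=\; \frac{\partial F}{\partial s}(x,s)\bigg|_{s=t} \;\in\; T_p M.
\]
This is smooth, and if $F_t$ carries $U\cap\partial M$ into $\partial M$, then the orbit $s\mapsto F(x,s)$ stays in $\partial M$ for $x\in U\cap\partial M$, so $\tilde{X}$ is automatically tangent to $\partial M$ along $\partial M\cap W$.

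Second, I would extend $\tilde{X}$ across $M$ using a bump function. The set $\hat{F}(A\times I)$ is compact and contained in the open set $W$, so I can choose nested open neighborhoods $W_1 \subset \overline{W_1} \subset W_2 \subset \overline{W_2} \subset W$ with $\overline{W_2}$ compact, and a smooth function $\rho \colon M\times I \to [0,1]$ equal to $1$ on $W_1$ with $\supp(\rho)\subset W_2$. Define the time-dependent vector field $X_t$ on $M$ by $X_t(p)=\rho(p,t)\,\tilde{X}(p,t)$ on $W$ and $X_t \equiv 0$ elsewhere. Then $X_t$ is smooth, tangent to $\partial M$, and has support contained in the projection of $\overline{W_2}$ to $M$, which is compact.

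Third, I would integrate $X_t$ to obtain the flow $\Phi_t \colon M \to M$. Compactness of the support of $X_t$ uniformly in $t\in[0,1]$ and tangency to $\partial M$ together guarantee that $\Phi_t$ is defined on all of $M$ for all $t\in I$, is the identity outside a compact set, and preserves $M$. By compactness of $A\times I$ and the fact that $\hat{F}(A\times I)$ lies in the interior of $\{\rho=1\}$, there is an open neighborhood $V$ of $A$ in $U$ such that $\hat{F}(V\times I)\subset \{\rho=1\}$; on this region $X_t$ is exactly the velocity vector of the curves $s\mapsto F(x,s)$, hence $\Phi_t(x)=F(x,t)$ for all $x\in V$ and $t\in I$, which is the required agreement on a neighborhood of $A\times I$.

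The main obstacle is the boundary. In the interior of $M$ this is the standard time-dependent ODE argument, but with $\partial M\neq\varnothing$ one needs $\tilde{X}$, and hence the cutoff $X_t$, to remain tangent to $\partial M$ throughout; this reduces to the geometric fact that each embedding $F_t$ must carry $U\cap\partial M$ to $\partial M$, so that the velocity vectors of the isotopy lie in $T\partial M$ along the boundary. A secondary technical point is ensuring the bump function $\rho$ is chosen so that the \emph{tube} $\hat{F}(V\times I)$ fits inside $\{\rho=1\}$ for a genuine open neighborhood $V$ of $A$ in $U$, rather than merely pointwise along $A\times I$; this follows from the compactness of $A\times I$ by a tube lemma argument.
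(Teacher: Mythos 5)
The paper does not prove this statement; it cites it as a theorem from Hirsch's \emph{Differential Topology} and uses it as a black box, so there is no in-paper proof to compare against. Your proof is a correct reconstruction of the standard textbook argument (indeed, essentially Hirsch's own): build the suspension vector field on the open set $W=\hat F(U\times I)$, cut it off by a bump function whose support lies in $W$ and which equals $1$ on a neighborhood of $\hat F(A\times I)$, and integrate the resulting compactly supported time-dependent vector field. You correctly flag the two nontrivial points. For the boundary-tangency claim, the ``geometric fact'' you invoke does in fact follow from the openness hypothesis: each slice $F_t(U)$ is exactly $W\cap(M\times\{t\})$ read off in $M$, hence open, so $F_t\colon U\to F_t(U)$ is a diffeomorphism between open subsets of a manifold with boundary and therefore carries $U\cap\partial M$ into $\partial M$; it would be worth spelling this out rather than asserting it. One further implicit assumption worth stating: you use $F_0=\mathrm{id}_U$ (part of Hirsch's definition of an isotopy of $U$) when applying uniqueness of ODE solutions to conclude $\Phi_t(x)=F(x,t)$ on the tube over $V$.
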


In our situation, the open set will be $V$. Let the compact set $A$ be the closure of a slightly smaller neighborhood of $A(f)$, such that $V \setminus A$ does not contain any fixed points of $f$. Let the manifold $M$ be $S \setminus (\Fix(f) \cap V^c)$; note that $\Fix(f) \cap V^c$ is a finite set. The image $\hat{F}(U \times I)$ will be open, because if $x \in \partial S \cap V$ then under the isotopy given above the image of $x$ will stay in $\partial S$ (because the boundary components are geodesics), so a component of $V$ touching a boundary component will continue to touch that boundary component under the isotopy. Therefore, by Theorem \ref{Hirsch}, there is a neighborhood $W \subseteq V$ of $A$ and an isotopy $F$ of $M$ such that $F_0 = id$ and $F_1|_W = f|_W$.

Throwing out components of $W$ if necessary, we may assume that each component of $W$ contains an element of $A(f)$, so $W$ consists of finitely many components. We may further assume that the components of $W$ are smooth submanifolds; in particular, each has finitely many boundary components. Finally, we may assume that a component of $W$ contains annular neighborhoods around all boundary circles in $\partial S$ that it touches. This is for the following reason: if $\gamma$ is a circle in $\partial S$, and $C$ is a component of $S \setminus W$ intersecting $\gamma$, there will be at most finitely many fixed points of $f$ in the interior of $C$; none of them will accumulate on $\partial C$. Thus we may take a small neighborhood $N$ of $\partial C$ containing no fixed points of $f$, and there is no barrier to an isotopy to the identity on $N$, which can be extended to all of $C$ by another application of Theorem \ref{Hirsch}. Note that adding annuli around boundary circles touched by components of $W$ may have the effect of gluing components of $W$ together.

The above argument holds with $f^{-1}$ replacing $f$, so we can let $F$ be an isotopy of $M$ that goes from the identity to $f^{-1}$ on $W$. Consider the isotopy $G_t = f \circ F_t$. $G_0 = f|_M$, and $G_1$ is a map which is the identity on $W$. This extends to an isotopy of $S$ where we fill in the points of $\Fix(f) \cap V^c$. This will be an isotopy relative to $\Fix(f)$ from $f$ to a map $\phi$ with $\phi|_W = id$.

The rest is quite similar to the proof of Theorem 1.2 of \cite{F&H 1}. We look at the components of $S \setminus W$, which are compact surfaces with boundary. Note that if $C$ is a component of $S \setminus W$, part of $\partial C$ may lie in $\partial S$ and part in $\Int(S)$. Since part of $\partial C$ may lie in $\partial S$, $\phi|_{\partial C}$ need not be the identity, which is a difference from \cite{F&H 1}. But $\phi|_{\partial C}$ will be isotopic to the identity, and we may apply Theorem 1.3 of Hirsch \cite{Hirsch} to $C$. The compact submanifold will be $\partial C$, and we will let $F$ be an isotopy from $i \colon \partial C \hookrightarrow C$ to $\phi^{-1}|_{\partial C}$. By Hirsch this extends to an isotopy (which we also call) $F$ of $C$. As above, we now take the isotopy $G_t = \phi \circ F_t$; close to the boundary this goes from $\phi$ to the identity. It extends to an isotopy of all of $S$ by doing nothing outside $C$. Therefore, we may assume that $\phi|_{\partial C} = id$.

As in \cite{F&H 1}, if $C$ is a disk with at most one element of $\Fix(f)$, we can add it to $W$ and after a further isotopy assume that $\phi|_{W}$ is still the identity. If $C = A$ is an annulus disjoint from $\Fix(f)$, then after an isotopy either $\phi|_A$ is a Dehn twist or $\phi|_{W \cup A} = id$. To any other components of $S \setminus W$, we may puncture at the (finitely many) fixed points of $f$, and apply Thurston's decomposition theorem; we again call the result $\phi \colon S \to S$. Form $R$ by taking any reducing curves coming from the Thurston decomposition of the components of $S \setminus W$, together with core curves of Dehn twist annuli, together with boundary curves of components of $W$ not accounted for as core curves of Dehn twist annuli (except for components of $\partial S$).

After modifying $\phi$ on tubular neighborhoods of the reducing curves, we may assume that (1) is satisfied.  Properties (2) and (3) are immediate from the construction.
\end{proof}

Note: Since $f$ is isotopic to the identity, $\phi$ will actually not permute the complementary components $S_i$ given in the definition of the normal form; it will leave them invariant. This is a conclusion of Lemma 6.3 of \cite{F&H 1} which remains valid when $S$ has boundary.

\section{An important lemma}

We need a couple of definitions, only slightly modified from Franks and Handel \cite{F&H 2}.

\begin{definition}
Let $S$ be endowed with a Riemannian metric. A smooth curve $\gamma$ has a well-defined length $\ell_S(\gamma)$. If $\tau$ is a smooth closed curve, define the \textit{exponential growth rate} of $\tau$ with respect to $f$ by $$\egr(f, \tau) = \liminf_{n \to \infty} \frac{\log(\ell_S(f^n(\tau)))}{n}.$$ Note that the exponential growth rate will be independent of the metric chosen on $S$.
\end{definition}

\begin{definition}
\label{LinearTracing}
Suppose that $f \in \Diff(S)$, that $M$ is a component of $S \setminus \partial S \setminus \Fix(f)$ with negative Euler characteristic, that $h = f|_M$ is isotopic to the identity, and that $\beta$ is an essential closed curve in $M$. If $\beta$ is peripheral in $M$, assume that the end that it encloses is blown up to a boundary component. Choose a covering translation $T \colon \tilde{M} \to \tilde{M}$ whose axis Ax$_T$ projects to a simple closed curve that is isotopic to $\beta$. Identify Ax$_T$ with $\R$ so that the action of $T$ on Ax$_T$ corresponds to a unit translation of $\R$. Let $\tilde{p} \colon \tilde{M} \to \R$ be a $T$-equivariant projection of $\tilde{M}$ onto Ax$_T$ (e.g. orthogonal projection) followed by the identification of Ax$_T$ with $\R$. Let $\tilde{h} \colon \tilde{M} \to \tilde{M}$ be the identity lift of $h$. We say that $x \in M$ \textit{linearly traces} $\beta$ if there is a lift $\tilde{x} \in \tilde{M}$ such that $$\liminf_{n \to \infty} \frac{|\tilde{p}(\tilde{h}(\tilde{x})) - \tilde{p}(\tilde{x})|}{n} > 0.$$
\end{definition}

Note that any two $T$-equivariant projections of $\tilde{M}$ onto Ax$_T$ differ by a bounded amount, so Definition \ref{LinearTracing} is independent of the choice of projection.

\begin{definition}
\label{PrimeEnds}



Let $U$ be a connected surface without boundary, with $H_1(U, \Z/2\Z)$ finite. There exists a (essentially unique) surface $U^*$, without boundary, such that $U$ is a topological subspace of $U^*$, and $U^* \setminus U$ is finite. The points in $U^* \setminus U$ are called ``ends'' of $U$.

Suppose $U$ is contained in a connected surface without boundary, $S$, such that the closure of $U$ in $S$ is compact. We form a compactification $\bar{U}$ of $U$ associated with this embedding of $U$ in $S$, which is a small modification of prime end compactification: instead of adding a single point to an end whose frontier consists of a single point, we perform a radial blow up. Given a $C^1$ diffeomorphism $f \colon S \to S$ such that $f(U) = U$, there is an extension of $f|_U$ to a homeomorphism $\bar{f} \colon \bar{U} \to \bar{U}$. We call this the \textit{canonical extension} of $f|_U$ to $\bar{U}$. It respects the operation of composition: if $g$ is another $C^1$ diffeomorphism of $S$ sending $U$ to $U$, then $\overline{f \circ g} = \bar{f} \circ \bar{g}$. (We get a similar composition-respecting extension to the prime end compactification; in that case, we may deal with homeomorphisms rather than diffeomorphisms.) For details, see \cite{F&H 1}, Lemma 5.1.
\end{definition}

The following is the primary ingredient in proving the main result. It is a minor modification of Lemma 4.2 of \cite{F&H 2}; parts (4) and (5) read slightly differently and part (3) is new.

\begin{theorem}
\label{SixPoss}
Suppose that $f \in \Diff_\mu(S)_0$ has infinite order, and $\supp(\mu) \nsubseteq \Fix(f)$. Suppose that $(S, f)$ satisfies property $(\star)$. Then, after possibly replacing $f$ with an iterate, at least one of the following holds:

\begin{enumerate}
\item There is a closed curve $\tau$ such that $\egr(f, \tau) > 0$.
\item $f$ is isotopic relative to $\Fix(f)$ to a composition of nontrivial Dehn twists about a finite collection of nonperipheral, nonparallel, disjoint simple closed curves in $S \setminus \Fix(f)$.
\end{enumerate}

\vspace{12pt}

For the following, $f$ is isotopic to the identity relative to $\Fix(f)$.

\vspace{12pt}

\begin{enumerate}
\setcounter{enumi}{2}
\item There is a component $C$ of $\partial S$ such that $f|_C$ has irrational rotation number.
\item There is an $f$-invariant annular component $U$ of $S \setminus \Fix(f)$ such that the restriction of $f$ to $\Int(U) = U \setminus \partial S$ has canonical extension to $\bar{f} \colon \overline{\Int(U)} \to \overline{\Int(U)}$, and there is $x \in \Int(U)$ such that the rotation number of $x$ with respect to the lift of $\bar{f}$ which fixes points on the boundary is nonzero.
\item There exists a component $M$ of $S \setminus \partial S \setminus \Fix(f)$ with negative Euler characteristic, a simple closed curve $\beta \subseteq M$ that is essential in $M$, and $x \in M$ that linearly traces $\beta$ in $M$.
\item There exists $x \in S$ and a lift $\tilde{f}$ such that $\Fix(\tilde{f}) \neq \varnothing$ and such that the rotation vector $\rho(x, \tilde{f})$ is not zero. In this case, $S$ has genus at least one, and $\tilde{f}$ is the identity lift if $S \neq T^2$.
\end{enumerate}
\end{theorem}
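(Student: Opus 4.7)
The plan is to invoke the normal form theorem (Theorem \ref{Normalform}), replacing $f$ up to isotopy relative to $\Fix(f)$ by a homeomorphism $\phi$ whose restriction to each complementary piece $M_i$ is either the identity, a pseudo-Anosov, or a periodic isometry, and which fixes annulus neighborhoods of a finite family $R$ of reducing curves. Since $f$ is isotopic to the identity, each $S_i$ is invariant (by the note following the proof of Theorem \ref{Normalform}), and after passing to a uniform iterate I may assume every $r_i = 1$, so each periodic piece literally becomes the identity. I then split into three branches according to the structure of $\phi$.

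If some $\phi|_{M_i}$ is pseudo-Anosov, any essential simple closed curve $\tau \subseteq M_i$ has $\phi$-iterates of exponentially growing length, and by isotopy-invariance of the exponential growth rate $\egr(f, \tau) > 0$, giving (1). If no pseudo-Anosov piece occurs but some reducing curve carries a nontrivial Dehn twist, then $\phi$ is isotopic relative to $\Fix(f)$ to a composition of Dehn twists about the curves in $R$, which by construction lie in $S \setminus \Fix(f)$ and may be arranged nonperipheral, nonparallel, and disjoint, giving (2). Otherwise $\phi = \mathrm{id}$ and $f$ is isotopic to the identity relative to $\Fix(f)$. In this last scenario I first check whether some boundary circle $C \subseteq \partial S$ has $f|_C$ of irrational rotation number --- this is the genuinely new case (3) --- and if not, a further iterate makes $f|_C$ have a fixed point on every boundary circle, so that every component $U$ of $S \setminus \Fix(f)$ is $f$-invariant.

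Now, since $\supp(\mu) \not\subseteq \Fix(f)$, Poincar\'{e} recurrence provides a recurrent point $x \in \supp(\mu) \setminus \Fix(f)$ lying in some such $U$. Brouwer's plane translation theorem excludes the possibility that $U$ is a disk, so $U$ is either an annulus or has negative Euler characteristic. In the annulus case I pass to the canonical extension $\bar{f}$ of Definition \ref{PrimeEnds} with the lift fixing boundary points, and a nonzero rotation number at $x$ yields (4); in the negative Euler characteristic case I lift $f|_U$ to the universal cover using the identity lift and look for linear drift of $\tilde{x}$ along the axis of some covering translation $T$, producing the essential simple closed curve $\beta$ linearly traced by $x$, i.e.\ (5). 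If every such $U$ exhibits vanishing drift and vanishing annular rotation, the remaining $\mu$-mass forces a nonzero rotation vector of $x$ on the universal cover of $S$ for the identity lift of $f$, yielding (6); property $(\star)$ in the low-genus cases rules out the degenerate configurations noted in the remarks following Theorem \ref{Dist}. The main obstacle I anticipate is the boundary bookkeeping in cases (4) and (5): arranging the canonical extension so that blow-ups of frontier points lying in $\partial S$ behave compatibly with those of interior frontier points, and matching the ``identity lift'' used on $\tilde{U}$ with the identity lift on $\tilde{S}$ used to define the global rotation vector. Once these foundational compatibilities are installed, the dichotomy between vanishing and non-vanishing rotational data tracks the strategy of Lemma 4.2 of \cite{F&H 2} essentially verbatim.
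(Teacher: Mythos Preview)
Your overall architecture matches the paper's: normal form, then the pseudo-Anosov/Dehn-twist/identity trichotomy, then a case split on the topology of the positive-measure component $U$ of $S \setminus \Fix(f)$. But several of the steps you label as routine are where the actual content lies, and your sketch of them would not go through as written.

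In the annulus case, you assume without argument that there is a lift of $\bar f$ fixing points on $\partial\overline{\Int(U)}$, and that the recurrent point $x$ has nonzero rotation number. The paper spends most of its effort here: it runs through the four configurations (both ends open, one end open with point/non-point frontier, neither end open), uses prime-end theory and property~$(\star)$ to exclude the degenerate possibilities, and only then concludes $\bar f$ has boundary fixed points; the nonzero rotation number comes not from the single recurrent $x$ but from Lemma~2.2 of \cite{F&H 2} applied after checking $\mu(\Int(U))>0$ (boundary arcs are wandering). Your disk case is also too quick: $U$ may meet $\partial S$ in arcs, so Brouwer does not apply directly; one must first argue those arcs are wandering and push the recurrence into $\Int(U)$.

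The more serious gap is the hyperbolic case. Your description ``look for linear drift of $\tilde x$ along the axis of some $T$; if none, the remaining $\mu$-mass forces a nonzero rotation vector'' is not an argument---there is no a priori reason a single recurrent orbit exhibits linear tracing, nor that the alternative is a global rotation vector. The paper first establishes $\Per(f|_U)=\Fix(f|_U)$ (via Lemma~3.7 of \cite{F&H 3}) and then invokes the structure theorem, Theorem~1.1 of \cite{F&H 1}, which gives a genuine trichotomy on a positive-measure set of birecurrent points: either $\gamma(x)$ equals a fixed geodesic (yielding (5)), or the orbit rotates about an isolated puncture (again (5)), or there is a partial cross-section $\alpha$, which forces $\rho_\mu(f_t)\wedge[\alpha]\neq 0$ and hence (6), together with the genus~$\geq 1$ conclusion. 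This trichotomy is the engine of the proof; without citing it your ``vanishing versus non-vanishing drift'' dichotomy has no mechanism.
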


\begin{proof}
We apply Theorem \ref{Normalform}. If $\phi|_{S_i}$ is pseudo-Anosov for some Thurston component $S_i$, then there will be a closed curve $\tau \subseteq S_i$ with $\egr(\phi, \tau) > 0$, hence $\egr(f, \tau) > 0$, and we have (1).

If this does not hold, then letting $\phi_k$ be a normal form for $f^k$ relative to $\Fix(f^k)$ for each $k$, no $\phi_k$ will have a component on which it is pseudo-Anosov. Either there exists a $k$ such that $\phi_k$ is a nontrivial composition of Dehn twists about simple closed curves, so after taking a finite power of $f$ we have (2), or for every $k$, $f^k$ is isotopic to the identity relative to $\Fix(f^k)$.

We must show that, in this latter case, assuming (3) does not hold, at least one of (4), (5), or (6) must hold. By Brown and Kister \cite{B&K}, since $f$ is orientation-preserving, every component of $S \setminus \Fix(f)$ is invariant under $f$. Brown and Kister state their result for manifolds without boundary, but it still works here: if $f \colon M \to M$ is a homeomorphism of a manifold with boundary, we can simply apply their result to the restriction $f|_{\Int(M)}$.

Since $\supp(\mu) \nsubseteq \Fix(f)$, there exists a component $U$ of $S \setminus \Fix(f)$ with $\mu(U) > 0$. By Poincar\'e recurrence, there are recurrent points in $U$. Suppose $U$ is homeomorphic to a closed disk, possibly with part of its boundary removed. A priori this is possible, because a component of $S \setminus \Fix(f)$ could contain parts of $\partial S$. $\partial U$ ($= U \cap \partial S$) cannot be a whole circle, or else $f$ would be a diffeomorphism of the closed disk with no fixed points. But if $x \in \partial U$ then there is a whole (open) interval $I$ in $\partial U$ containing $x$, since $U \subseteq S$ is open. Because the boundary points of $I$ in $\partial S$ are fixed, $f(I) = I$, and all points in $I$ must be sent in one direction, so they are not recurrent. Therefore, there must be a recurrent point in the interior of $U$. But by the Brouwer plane translation theorem this implies $f$ has a fixed point in the interior of $U$, a contradiction.

Now suppose $U$ is an annulus. $U$ may be homeomorphic to an open annulus, a closed annulus, or something intermediate. The possibilities are as follows:

\begin{enumerate}[(i)]
\item Both ends of $U$ are open.
\item Exactly one end of $U$ is open.
\begin{enumerate}
\item The open end has frontier consisting of a single point
\item The open end has frontier consisting of more than one point
\end{enumerate}
\item Neither end of $U$ is open.
\end{enumerate}

We claim that $\bar{f} \colon \overline{\Int(U)} \to \overline{\Int(U)}$ has a fixed point on one of the two boundary circles of $\overline{\Int(U)}$. For case (i) this is proved by Franks and Handel in their proof of Lemma 4.2 of \cite{F&H 2}. Case (ii)(b) is identical; we have an open end whose frontier consists of more than a single point, so by prime end theory the circle of prime ends corresponding to that end is fixed pointwise.

In case (ii)(a), notice that the non-open end of $U$ cannot be completely closed. If it were, it would consist of a single boundary component $C$ of $\partial S$, and $C$ would be peripheral in $S \setminus \Fix(f)$. In this case, $S$ is a closed disk, and $f$ has a single interior fixed point, which is ruled out by the assumption that $(S, f)$ satisfies property $(\star)$. Similarly, in case (iii), the ends cannot both be closed, because then $S$ would be forced to be a closed annulus and $f$ has no fixed points, which also violates the hypotheses of the theorem.

Suppose an end $e$ of $U$ is partly open and partly closed (the closed parts are arcs contained in $\partial S$). We form the prime end compactification $\overline{\Int(U)}$, and we have $\bar{f} \colon \overline{\Int(U)} \to \overline{\Int(U)}$. Maximal arcs (connected components) in $U \cap \partial S$ have exactly corresponding arcs in $\partial \overline{\Int(U)}$, and the dynamics of $f$ and $\bar{f}$ on these arcs are conjugate. Let the arc $\alpha$ be a connected component of $U \cap \partial S$, with endpoints $x_1$ and $x_2$. Without loss of generality, for any $x \in \alpha$, $f^n(x) \to x_1$ and $f^{-n}(x) \to x_2$ as $n \to \infty$. By the conjugacy, if $\bar{\alpha}$ denotes the corresponding arc in $\partial \overline{\Int(U)}$, the endpoints of $\bar{\alpha}$ are fixed by $\bar{f}$. Thus, in all cases, we have established the claim.

By the reasoning of the above paragraph, any points in $\partial U$ are wandering. Thus no measure can be supported on them, so $\mu(\Int(U)) > 0$. Therefore, by Lemma 2.2 of \cite{F&H 2}, for every lift $\tilde{\bar{f}}$ of $\bar{f}$ to the universal cover of $\overline{\Int(U)}$ there is a point $x \in \Int(U)$ such that $\rho(x, \tilde{\bar{f}}) \neq 0$. In particular, this is true for a lift of $\bar{f}$ fixing points on the boundary of the universal cover of $\overline{\Int(U)}$. This implies that (4) holds.

We are finished exploring the possibilities if $U$ is an annulus. Therefore, we may assume that $U$ has negative Euler characteristic. We claim that $\Per(f|_U) = \Fix(f|_U)$. Since (3) does not hold, after taking a finite power if necessary we may assume that $f$ has a fixed point, and no non-fixed periodic points, on each component of $\partial S$. There cannot be a non-fixed periodic point in the interior of $U$, as we can see by applying Lemma 3.7 of \cite{F&H 3} to the restriction of $f$ to the interior of $U$.

Since $f$ has fixed points on each component of $\partial S$, any boundary arcs in $U$ consist of wandering points and must have measure $0$, so $\Int(U)$ has positive measure. Analogously to \cite{F&H 2}, we apply the proof of Theorem 1.1 of \cite{F&H 1} to $M \coloneqq \Int(U)$, as follows.

We use the notation and terminology of \cite{F&H 1}. If there exists a simple closed geodesic $\gamma$ such that, for a positive measure set $P \subseteq M$, for any $x \in P$ we have $\gamma(x) = \gamma$, then (5) holds. If there is an isolated puncture $c$ in $M$ such that for all $x$ in a positive measure set $P \subseteq M$ we have that $\Or(x)$ rotates about $c$, then again (5) holds.

Suppose there exists a simple closed geodesic $\alpha \subseteq M$ such that for a positive measure set $P \subseteq M$, for all $x \in P$, $x$ is birecurrent and $\gamma(x)$ crosses $\alpha$, and for all $x$ such that $\gamma(x)$ crosses $\alpha$ the crossing is always in the same direction (so $\alpha$ is a ``partial cross section''). In this case, note that $S$ must have genus at least one, since by \cite{F&H 1}, Lemma 11.6(2), for all $x \in P$, $\gamma(x)$ is birecurrent, but if $S$ had genus zero $\alpha$ would separate it into two components, and since $\gamma(x)$ is only allowed to cross $\alpha$ in one direction, after crossing it would have to stay in one component forever. In this case, the reasoning of Theorem 1.1 implies that $\rho_\mu(f_t) \wedge [\alpha] \neq 0$, so $\rho_\mu(f_t) \neq 0$, where $f_t$ is an isotopy of $f$ to the identity relative to $\Fix(f)$, implying (6).
\end{proof}

\section{Proof of main result}

We will need the following definitions, slightly modified from Franks and Handel \cite{F&H 2}.

\begin{definition}
We define \textit{linear displacement} as follows. Let $S$ be a surface of nonpositive Euler characteristic. If $S$ is closed, we follow \cite{F&H 2}. Suppose $S$ has nonempty boundary. 

Let $d$ be a metric on $S$, and $\tilde{d}$ lift of $d$ to the universal cover $\tilde{S}$. We will say that $f$ has \textit{linear displacement} if either of the following conditions holds:

\begin{itemize}
\item $S \neq A$, $\tilde{f}$ is the identity lift, and there exists $\tilde{x} \in \tilde{S}$ such that $$\liminf_{n \to \infty} \frac{\tilde{d}(\tilde{f}^n(\tilde{x}), \tilde{x})}{n} > 0.$$

\item $S = A$, and there exists a lift $\tilde{f}$ and $x_1, x_2 \in \tilde{A} = \R \times [0, 1]$ such that $$\liminf_{n \to \infty} \frac{\tilde{d}(\tilde{f}^n(\tilde{x}_1), \tilde{f}^n(\tilde{x}_2))}{n} > 0.$$
\end{itemize}
\end{definition}

\begin{definition}
We define \textit{spread} in the following way. Suppose $S$ is a compact surface (possibly) with boundary. Let $\gamma$ be an embedded smooth path. Suppose that one of the following holds: the endpoints of $\gamma$ lie in distinct components of $\partial S$; one endpoint of $\gamma$ is in $\partial S$ and the other lies in $\Fix(f)$; or the endpoints of $\gamma$ are distinct elements of $\Fix(f)$. Let $\beta$ be a simple closed curve lying in $S$ that crosses $\gamma$ exactly once. Let $A$ be the endpoint set of $\gamma$. If an endpoint of $\gamma$ lies in $\Int(S)$, remove it and blow up the puncture to a boundary circle; if it lies in $\partial S$, do nothing. Call the resulting surface $M$. We can think of $\gamma$ and $\beta$ as curves in $M$. Suppose for the moment that $S$ has genus at least one; then $M$ has negative Euler characteristic, and we choose a hyperbolic structure on $M$. Choose nondisjoint lifts $\tilde{\beta}, \tilde{\gamma} \subseteq \tilde{M}$, and let $T \colon \tilde{M} \to \tilde{M}$ be the covering translation corresponding to $\tilde{\beta}$.  Denote $T^i(\tilde{\gamma})$ by $\tilde{\gamma}_i$. Each $\tilde{\gamma}_i$ is an embedded path in $\tilde{M}$. Moreover, $\tilde{\gamma}_i$ separates $\tilde{\gamma}_{i - 1}$ from $\tilde{\gamma}_{i + 1}$.

An embedded smooth path $\alpha \subseteq S$ whose interior is disjoint from $A \cap \Int(S)$ can be thought of as a path in $M$. For each lift $\tilde{\alpha} \subseteq \tilde{M}$, there exist integers $a < b$ such that $\tilde{\alpha} \cap \tilde{\gamma}_i \neq \varnothing$ if and only if $a < i < b$. Define $$\tilde{L}_{\tilde{\beta}, \tilde{\gamma}}(\tilde{\alpha}) = \max\{0, b - a - 2\}$$ and $$L_{\beta, \gamma}(\alpha) = \max\{\tilde{L}_{\tilde{\beta}, \tilde{\gamma}}(\tilde{\alpha})\}$$ as $\tilde{\alpha}$ varies over all lifts of $\alpha$.

Now suppose that the Euler characteristic of $M$ is not negative, so it is $0$. Thus $M$ is a closed annulus. In this case, $\tilde{M}$ is identified with $\R \times [0, 1]$, where $T(x, y) = (x + 1, y)$, and $\tilde{\gamma}$ goes from one boundary component to the other. With these modifications, $L_{\beta, \gamma}(\alpha)$ is defined as in the previous case.

There is an equivalent definition of $L_{\beta, \gamma}(\alpha)$ which does not involve covers. Namely, $L_{\beta, \gamma}(\alpha)$ is the maximum value $k$ for which there exist subarcs $\gamma_0 \subseteq \gamma$ and $\alpha_0 \subseteq \alpha$ such that $\gamma_0\alpha_0$ is a closed path that is freely homotopic to $\beta^k$ relative to $A \cap \Int(S)$. We allow the possibility that $\gamma$ and $\alpha$ share one or both endpoints. The finiteness of $L_{\beta, \gamma}(\alpha)$ follows from the smoothness of the arcs $\alpha$ and $\gamma$.

Define the \textit{spread} of $\alpha$ with respect to $f, \beta$, and $\gamma$ to be $$\sigma_{f, \beta, \gamma}(\alpha) = \liminf_{n \to \infty} \frac{L_{\beta, \gamma}(f^n \circ \alpha)}{n}.$$
\end{definition}

\begin{lemma}
\label{FourPossibilities}
Suppose that $f \in \Diff_\mu(S)_0$ has infinite order and that $\supp(\mu) \nsubseteq \Fix(f)$. Suppose that $(S, f)$ satisfies property $(\star)$. Then, after possibly replacing $f$ with an iterate, at least one of the following holds:

\begin{enumerate}
\item There is a closed curve $\tau$ such that $\egr(f, \tau) > 0$.
\item $f$ has linear displacement.
\item There is a $k$-fold cover $S_k$ of $S$ with $k = 1$ or $2$ and a lift $f_k \colon S_k \to S_k$ of $f \colon S \to S$ which is isotopic to the identity, and $\alpha, \beta,$ and $\gamma$ exist as in the definition of spread, such that $\sigma_{f_k, \beta, \gamma}(\alpha) > 0$.
\item There is a component $C$ of $\partial S$ such that $f|_C$ has irrational rotation number.
\end{enumerate}
\end{lemma}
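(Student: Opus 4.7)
The plan is to invoke Theorem \ref{SixPoss} on $f$ and show that each of its six alternatives implies one of the four alternatives stated here. Two matches are on the nose: alternative (1) of Theorem \ref{SixPoss} is literally (1) of the present lemma, and alternative (3) is (4). Alternative (6) of Theorem \ref{SixPoss} already produces a lift $\tilde f$ (the identity lift whenever $S \neq T^2$) with a point of nonzero rotation vector, which is exactly the defining inequality for linear displacement in the case $S \neq A$, giving (2).

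For alternative (4) of Theorem \ref{SixPoss} we are given an invariant annular component $U$ and a point of $\Int(U)$ whose rotation number under the boundary-fixing lift $\tilde{\bar f}$ to the universal strip $\R \times [0,1]$ is nonzero. That lift shifts the point linearly in the $\R$-direction, so if $S = A$ this is immediately linear displacement in the annulus sense. If $S \neq A$, the core of $U$ is essential in $S$ and its class has infinite order in $\pi_1(S)$; a lift of the orbit in $\tilde S$ tracks an axis of the corresponding covering translation and accumulates distance linearly, so (2) holds for the identity lift of $f$.

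The remaining cases (5) and (2) of Theorem \ref{SixPoss} both should yield positive spread, i.e.\ conclusion (3). For (5), linear tracing of an essential simple closed curve $\beta$ in a component $M$ by $x \in M$ gives a lift $\tilde x$ whose $\tilde p$-coordinates grow linearly under $\tilde h^n$. Pick any transverse arc $\gamma$ meeting $\beta$ exactly once with endpoints placed legally for the spread definition (distinct components of $\partial M$ or fixed points on either side of $\beta$), and any short smooth $\alpha$ through $x$; because the lifts $T^i(\tilde \gamma)$ separate the axis into intervals of bounded diameter, the image $\tilde h^n \tilde \alpha$ must cross $\Omega(n)$ consecutive translates, so $L_{\beta,\gamma}(f^n \alpha) \geq c n - O(1)$ and $\sigma_{f,\beta,\gamma}(\alpha) > 0$; this is (3) with $k=1$. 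For alternative (2) of Theorem \ref{SixPoss}, pick a curve $\beta$ in the Dehn twist collection with nonzero twist coefficient, take an arc $\gamma$ crossing $\beta$ once with legal endpoints (the nonperipheral, nonparallel hypothesis plus property $(\star)$ supplies enough boundary components or fixed points on either side of $\beta$), and let $\alpha$ cross $\beta$ transversally; iterating the normal form, $f^n \alpha$ winds $\Theta(n)$ times around $\beta$, forcing $L_{\beta,\gamma}(f^n \alpha)$ to grow linearly.

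The main obstacle is arranging the endpoint and sidedness conditions of the spread definition in case (2) of Theorem \ref{SixPoss}: the spread formalism demands $\gamma$'s endpoints in $\partial S$ and/or $\Fix(f)$ in very specific configurations, and in low-genus or few-fixed-point situations this cannot always be done on $S$ itself using the identity lift. This is where the $k \in \{1,2\}$ clause of conclusion (3) enters: when an obstruction arises from a nonseparating twist curve $\beta$ (so that the two local sides of $\beta$ cannot be distinguished globally on $S$), pass to the orientation double cover associated with $\beta$ to obtain $S_2 \to S$ with a lift $f_2$ isotopic to the identity, on which $\beta$ lifts to a separating curve; run the Dehn-twist spread argument upstairs. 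The bookkeeping that ensures $f_2$ remains isotopic to the identity and that the spread on $S_2$ genuinely measures the twist on $S$ is the most delicate part of the proof.
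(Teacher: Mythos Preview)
Your routing of Theorem \ref{SixPoss}(4) to linear displacement has a genuine gap. You assert that ``the core of $U$ is essential in $S$ and its class has infinite order in $\pi_1(S)$,'' but this is false in general: $U$ is an annular component of $S \setminus \Fix(f)$, and one (or both) of its ends can be a single isolated fixed point in $\Int(S)$, in which case the core of $U$ bounds a disk in $S$ and is nullhomotopic. Then lifts of the orbit to $\tilde S$ stay bounded and no linear displacement is produced. The paper avoids this by sending case (4) to \emph{spread}, not linear displacement: take $\beta$ the core of $U$, take $\gamma$ an arc across $U$ whose endpoints are fixed points of $f$ (the analysis in Theorem \ref{SixPoss} guarantees such endpoints exist on each end of $U$, even when an end meets $\partial S$), and take $\alpha$ an arc from the point $x$ of nonzero rotation number to a fixed point of $\bar f$ on $\partial\overline{\Int(U)}$. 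This works regardless of whether $\beta$ is essential in $S$.

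Two further points. First, in case (5) you claim $k=1$ suffices, but the arc $\gamma$ you need---crossing $\beta$ once with endpoints in $\Fix(f)\cap\Int(S)$ or in $\partial S$---may fail to exist on $S$ when $\beta$ is nonseparating; the paper passes to a twofold cover here (not only in case (2)) to force such a $\gamma$. Second, your handling of case (2) misses a dichotomy: the paper splits on whether some reducing curve is essential in $S$. If so, one gets \emph{linear displacement} directly (a fixed point and a lift that moves it, via the identity lift when $S\neq A$); the spread argument is used only when every reducing curve bounds a disk in $S$, and the twofold cover is invoked there to separate fixed points into distinct disks. Your ``orientation double cover'' terminology is also off---$S$ is orientable; the relevant cover is the $\Z/2$-cover dual to $[\beta]$.
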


\begin{proof}
We closely follow the proof of Corollary 5.5 of \cite{F&H 2}. We may assume that $\partial S \neq \varnothing$, since the result is proved in \cite{F&H 2} for closed surfaces. It's obvious that Theorem \ref{SixPoss}(1) implies Lemma \ref{FourPossibilities}(1) and Theorem \ref{SixPoss}(3) implies Lemma \ref{FourPossibilities}(4). If $S$ is a surface (of genus at least one) with nonempty boundary and Theorem \ref{SixPoss}(6) holds, some point has nonzero rotation vector with respect to the identity lift, and Lemma \ref{FourPossibilities}(2) follows by definition of linear displacement.

Suppose Theorem \ref{SixPoss}(4) holds. As in \cite{F&H 2}, Corollary 5.5, we let $\beta$ be the core curve of $U$. We let $\gamma$ be an arc with interior in $U$ which extends to an arc $\bar{\gamma}$ in $\bar{U}$ such that the endpoints of $\gamma$ in $S$ are in $\Fix(f)$; this is possible even if one end of $U$ consists entirely of a component of $\partial S$ since that component will have a fixed point of $f$. We let $\alpha$ be an arc with interior in $U$ that has $x$ as one endpoint (where $x$ is a point with positive rotation number, as given in Theorem \ref{SixPoss}) and extends to an arc $\bar{\alpha}$ with endpoint in $\partial \bar{U}$ a fixed point of $\bar{f}$. Then $\sigma_{f, \beta, \gamma}(\alpha) > 0$, implying Lemma \ref{FourPossibilities}(3) with $k = 1$.

Suppose Theorem \ref{SixPoss}(5) holds, so there is a hyperbolic component $M$ of $S \setminus \partial S \setminus \Fix(f)$, a point $x \in M$, and a simple closed curve $\beta \subseteq M$ such that $x$ linearly traces $\beta$ in $M$. As in \cite{F&H 2}, Corollary 5.5, we may pass to a twofold cover of $S$ if necessary to assume that there exists a smooth curve $\gamma$ with interior in $M$, crossing $\beta$ exactly once, with endpoints either in $\Fix(f) \cap \Int(S)$ or in $\partial S$. Then we apply the reasoning of Lemma 5.3 of \cite{F&H 2} without change to conclude that $\sigma_{f, \beta, \gamma}(\gamma) > 0$, so Lemma \ref{FourPossibilities}(3) holds with $k = 1$ or $2$.

Finally, suppose Theorem \ref{SixPoss}(2) holds: (Some power of) $f$ is isotopic, relative to $\Fix(f)$, to a composition of Dehn twists about a collection $R(f)$ of disjoint, nonparallel (in $S \setminus \Fix(f)$), nonperipheral simple closed curves. Let $R_e(f) \subseteq R(f)$ be those curves that are essential in $S$.

Suppose $R_e \neq \varnothing$. Then $S$ cannot be the closed disk. If $S$ has genus $0$, it must have at least $2$ boundary components. If $S = A$ is the closed annulus, we can find fixed points --- possibly on $\partial A$; note that we are assuming $f$ has fixed points on each boundary component --- $x_1$ and $x_2$ and a path $\tau$ connecting them such that $\tau$ has exactly one intersection with $R_e$. Then no lift of $f$ to the infinite strip $\tilde{A}$ fixes the full preimage of $x_1$ and $x_2$, so we have Lemma \ref{FourPossibilities}(2). If $S \neq A$, then we can make use of the identity lift $\tilde{f} \colon \tilde{S} \to \tilde{S}$. We can find a point $x \in \Fix(f)$, a lift $\tilde{x} \in \tilde{S}$, and a ray $\tilde{\sigma}$ connecting $\tilde{x}$ to a point in $S_\infty$ such that $\tilde{\sigma}$ crosses exactly one element of the full preimage $\tilde{R}_e$ of $R_e$. In this case, $\tilde{x}$ is not fixed by the identity lift $\tilde{f}$, so again Lemma \ref{FourPossibilities}(2) holds.

Now suppose that $R_e = \varnothing$, so every element of $R(f)$ bounds a disk in $S$ which contains at least two points in $\Fix(f)$. After passing to a twofold cover if necessary, we can assume it is not the case that all elements of $\Fix(f)$ lie in the same disk in $S \setminus R(f)$. Therefore, choose $x_1$ and $x_2$ lying in different disks. Exactly as in \cite{F&H 2}, we can find a curve $\gamma$ connecting them such that $\sigma_{f, \beta, \gamma}(\gamma) > 0$, so Lemma \ref{FourPossibilities}(3) holds.
\end{proof}

\begin{lemma}
\label{IrrRot}
Let $S$ and $f \colon S \to S$ be as in the statement of Theorem \ref{Dist}. If item (4) of Lemma \ref{FourPossibilities} holds, then $f$ is not a distortion element in $\Diff(S)_0$.
\end{lemma}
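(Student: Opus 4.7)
My plan is to reduce case (4) to case (2) of Lemma \ref{FourPossibilities} (linear displacement), and then invoke the companion result (which is a key ingredient elsewhere in the argument) saying that linear displacement is incompatible with being a distortion element.

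Concretely, I equip $S$ with a Riemannian metric for which $\partial S$ is totally geodesic, so that in the universal cover $\tilde{S}$ each component $\tilde{C}$ of the preimage of $C$ is a geodesic line, isometric to $\R$ in the lifted metric $\tilde{d}$. Let $\tilde{f}\colon \tilde{S}\to\tilde{S}$ be the identity lift of $f$, obtained by lifting an isotopy of $f$ to the identity in $\Diff(S)_0$. Since $f$ preserves each boundary component of $S$, the lift $\tilde{f}$ preserves $\tilde{C}$, and the restriction $\tilde{f}|_{\tilde{C}}\colon \R \to \R$ is a lift of the circle diffeomorphism $f|_C$ to its universal cover. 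Its translation number $\tilde{\tau}$ has fractional part equal to $\alpha$, the irrational rotation number of $f|_C$; hence $\tilde{\tau}$ is irrational and in particular nonzero.

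For any $\tilde{x}\in\tilde{C}$, the defining property of translation number gives
$$\lim_{n\to\infty} \frac{\tilde{f}^n(\tilde{x}) - \tilde{x}}{n} \;=\; \tilde{\tau}$$
when displacement is measured by signed arclength along $\tilde{C}$. Because $\tilde{C}$ is a geodesic, this arclength coincides with $\tilde{d}$, so $\liminf_{n\to\infty} \tilde{d}(\tilde{f}^n(\tilde{x}), \tilde{x})/n \geq |\tilde{\tau}| > 0$. This is exactly the linear displacement condition for $S\neq A$. In the annulus case $S = A$, property $(\star)$ forces $f$ to have at least one fixed point $p$, which must lie off $C$ (since $f|_C$ is fixed point free), and a suitable lift of $f$ fixes a preimage $\tilde{p}\in\tilde{A}$; taking $\tilde{x}_1\in\tilde{C}$ and $\tilde{x}_2 = \tilde{p}$, the separation $\tilde{d}(\tilde{f}^n(\tilde{x}_1), \tilde{f}^n(\tilde{x}_2)) = \tilde{d}(\tilde{f}^n(\tilde{x}_1), \tilde{p})$ still grows linearly in $n$ with rate at least $|\tilde{\tau}|$, yielding the two-point version of linear displacement.

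The companion argument that linear displacement precludes distortion works as follows. For any finitely generated subgroup $G = \langle g_1, \dots, g_s\rangle \subseteq \Diff(S)_0$ containing $f$, fix lifts $\tilde{g}_i$ to $\tilde{S}$; each $\tilde{g}_i$ has uniformly bounded displacement on $\tilde{S}$ (by equivariance under deck transformations and compactness of a fundamental domain of $S$), so any word of length $k$ in the $\tilde{g}_i$ displaces any point by at most $Ck$ for a uniform $C$. A sublinear bound on $|f^n|_G$ would therefore give a sublinear bound on the displacement of $\tilde{f}^n(\tilde{x})$, contradicting the linear lower bound from $|\tilde{\tau}| > 0$.

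The main obstacle I anticipate is reconciling the identity lift $\tilde{f}$ with lifts obtained by composing chosen lifts of the generators along a minimal-length word representing $f^n$; these can differ by an element of the image of $\pi_1(\Diff(S)_0) \to \pi_1(\Diff^+(C)_0) \cong \Z$, that is, by an integer translation along $\tilde{C}$. When $\Diff(S)_0$ is simply connected (as for $\genus(S) \geq 2$ with nonempty boundary), this discrepancy is trivial. In the remaining cases the ambiguity is at most a bounded additive integer, which does not affect the linear-versus-sublinear comparison at the heart of the argument, because the irrationality of $\alpha$ keeps $|\tilde{\tau}|$ bounded away from zero independently of the choice of lift.
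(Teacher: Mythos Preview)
Your reduction of case (4) to linear displacement is clean and correct whenever $\chi(S)\le 0$: with $\partial S$ totally geodesic, the identity lift restricted to a lift $\tilde C$ of $C$ has irrational (hence nonzero) translation number, so a point of $\tilde C$ is displaced linearly, and for $S=A$ the two--point version with a lifted fixed point works for the same reason. Once you have linear displacement, the non-distortion conclusion is exactly the ``item (2) $\Rightarrow$ not distorted'' step already established in the paper. Your final paragraph about $\pi_1(\Diff(S)_0)$ is unnecessary in these cases: for $\chi(S)<0$ the group $\Diff(S)_0$ is contractible, so identity lifting is a genuine homomorphism and there is no ambiguity, while for $S=A$ the two--point displacement is lift-independent.

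The genuine gap is $S=D$, the closed disk. Here $\chi(D)=1>0$, so linear displacement is not even defined (the definition in the paper requires nonpositive Euler characteristic), and your argument degenerates: $\tilde D=D$, $\tilde f=f$, and $\tilde C=C$ is a circle, not a line, so there is no translation number to invoke and $d(f^n(x),x)$ is bounded by $\mathrm{diam}(D)$. Yet item (4) can certainly occur for the disk: if $f|_{\partial D}$ has irrational rotation number then $(\star)$ forces $f$ to have at least two interior fixed points, and Lemma \ref{FourPossibilities}(4) is exactly the conclusion. The paper handles genus zero (including $D$) by first passing to an auxiliary annulus---collapsing extra boundary components to points (which are then fixed by \emph{every} element of $\Diff(S)_0$, not just $f$) and, when needed, blowing up an interior fixed point---and then using a fixed point $p$ in that annulus to define a canonical lift $g\mapsto\tilde g$ that is a group homomorphism. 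This is precisely what your argument lacks: a construction that produces, for the disk, a homomorphism from the ambient group $\langle f_1,\dots,f_s\rangle\subset\Diff(D)_0$ to something on which the irrational translation number of $f$ forces linear growth. Simply blowing up a fixed point of $f$ does not suffice, since the generators $f_i$ need not fix that point.
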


\begin{proof}
First suppose $S$ has genus $0$. The number of boundary components plus the number of fixed points of $f$ is at least $3$. After blowing up fixed points or collapsing boundaries to points if necessary, we may assume that in addition to $C$ there is exactly one other boundary, $C'$, so $S = A$ is a closed annulus. There will be at least one fixed point, call it $p$.

Let $\tilde{A}$ be the universal cover of $A$. Identify the boundary components of $\tilde{A}$ with $\R$ in such a way that translation by 1 corresponds to making a full circle around $C$ or $C'$. Choose a lift $\tilde{p}$ of $p$.

If $g \colon A \to A$ is an orientiation-preserving homeomorphism preserving the boundaries, then we may lift it to a homeomorphism of $\tilde{A}$ (in fact, there are countably many lifts, parametrized by $\Z$). If $g(p) = p$, there is a canonical lift $\tilde{g}$ such that $\tilde{g}(\tilde{p}) = \tilde{p}$. This lifting preserves the group structure: if $g$ and $h$ are two homeomorphisms fixing $p$, then $\widetilde{h \circ g} = \tilde{h} \circ \tilde{g}$. Let us define $\hat{g} \colon \R \to \R$ to be the restriction of $\tilde{g}$ to $\tilde{C}$ (under its identification with $\R$), so $\widehat{h \circ g} = \hat{h} \circ \hat{g}$.

Suppose $f$ is distorted in $\Diff(S)_0$; let $f_1, \dots, f_s \in \Diff(S)_0$, such that $f \in \langle f_1, \dots, f_s \rangle$, and such that $\displaystyle \liminf_{n \to \infty} \frac{|f^n|}{n} = 0$, where $|\cdot|$ is the word length in the generators $f_1, \dots, f_s$.

There is a number $M$ such that for all $x \in \R$ and $i = 1, \dots, s$, $|\hat{f_i}(x) - x| < M$. This implies that $\displaystyle \liminf_{n \to \infty} \frac{|\hat{f}^n(0) - 0|}{n} \leq \liminf_{n \to \infty} \frac{M|f^n|}{n} = 0$. But $\hat{f}$ is a lift of an irrational rotation, so it has irrational translation number, a contradiction.

The idea similar if the genus of $S$ is $\geq 1$. Construct $S_C$ from $S$ by collapsing all boundaries of $S$ except $C$ to points. Form $S'$ from $S_C$ by filling in the hole bounded by $C$ with a disk. The universal cover $\tilde{S'}$ is (homeomorphic to) an open disk. Let $\tilde{S_C} \subseteq \tilde{S'}$ be the preimage of $S_C$ (not the universal cover of $S_C$).

Let $g \colon S \to S$ be a homeomorphism isotopic to the identity. This yields a homeomorphism $\hat{g}$ of $S_C$. We can choose a lift $\tilde{g}$ of $\hat{g}$ to $\tilde{S_C}$ that preserves a specified component $\tilde{C}$ of the preimage of $C$, and since $g$ is isotopic to the identity $\tilde{g}$ will preserve all components of the preimage of $C$. If $h \colon S \to S$ is also isotopic to the identity, then $\widetilde{h \circ g} = \tilde{h} \circ \tilde{g}$, since $\widetilde{h \circ g}$ and $\tilde{h} \circ \tilde{g}$ differ by at most a covering translation that fixes preimages of $C$.

Finally, we can form an annulus $A$ by collapsing all preimages of $C$ except $\tilde{C}$ to points. Let $p$ be one of these points coming from collapsing preimages of $C$. Then $\tilde{g}$ induces a homeomorphism $\widehat{\tilde{g}}$ of $A$ which fixes $p$. We are now in exactly the situation above, since all the operations that led from $g$ to $\widehat{\tilde{g}}$ preserved group structure.
\end{proof}

\begin{proof}[Proof of Theorem \ref{Dist}]
We must show that each of the four items of Lemma \ref{FourPossibilities} will imply that $f$ cannot be a distortion element. If item (1) holds, then non-distortion follows exactly as in Lemma 6.3 of \cite{F&H 2}. If item (2) holds, then whether $S = A$ or $S \neq A$ non-distortion follows exactly according to the reasoning of Lemma 6.1 of \cite{F&H 2}. If item (4) holds, then we have non-distortion by Lemma \ref{IrrRot}.

Suppose item (3), positive spread, holds. Then the reasoning of Lemmas 6.6 and 6.8 of \cite{F&H 2} remains valid, and we can slightly modify the reasoning of Lemma 6.7 of \cite{F&H 2}. Let $\gamma$ be as in our definition of spread; in particular, its endpoints are either fixed by $f$ or lie in boundary components of $S$.

If the endpoints $x$ and $y$ of $\gamma$ are both in $\Fix(f) \cap \Int(S)$, then the reasoning of Lemma 6.7 goes through without change, but note that $g_{x',y'}$ is only defined when $x', y' \in \Int(S)$ (there is no diffeomorphism of $S$ moving $x$ and $y$ to $x'$ and $y'$ if one of $\{x', y'\}$ lies in $\partial S$). If $x \in \Fix(f) \cap \Int(S)$ and $y \in \partial S$, then we can do the same procedure, except we only look at $x', y'$ such that $y'$ lies on the same component of $\partial S$ as $y$. In either case, Lemma 6.8 continues to hold, so $f$ is not distorted.

If both $x$ and $y$ are in different components of $\partial S$, then it is easier. In that case, if $f$ were a distortion element in some finitely generated subgroup of $\Diff(S)_0$ --- say the group generated by $f_1, \dots, f_n$ --- then the $f_i$ will leave invariant the boundary components of $S$. The analogue of Lemma 6.7 holds for the $f_i$, namely, there exists $K(f_i)$ such that if $\alpha$ is a curve as in the definition of spread, then $$L_{\beta, \gamma}(f_i(\alpha)) \leq L_{\beta, \gamma}(\alpha) + K(f_i).$$ This is true because there is a uniform bound on the distance points in the universal cover move under any lift $\tilde{f_i} \colon \tilde{S} \to \tilde{S}$ to the universal cover of $S$. Thus we again conclude that $f$ is not distorted.
\end{proof}

%

\end{document}